\newtheorem{thm}{Theorem}[section]
\newtheorem{cor}[thm]{Corollary}
\newtheorem{lem}[thm]{Lemma}
\newtheorem{rem}[thm]{Remark}
\numberwithin{equation}{section}
\title{Geometry-Driven Conditioning of Multivariate Vandermonde Matrices in High-Degree Regimes}
\author{Omer Friedland}
\address{Institut de Math\'ematiques de Jussieu, Sorbonne Universit\'e, 4 place Jussieu, 75005 Paris}
\email{omer.friedland@imj-prg.fr}
\author{Yosef Yomdin}
\address{Department of Mathematics, The Weizmann Institute of Science, Rehovot 76100, Israel.}
\email{yosef.yomdin@weizmann.ac.il}
\begin{document}

\begin{abstract}
We study multivariate monomial Vandermonde matrices $V_N(Z)$ with
arbitrary distinct nodes $Z=\{z_1,\dots,z_s\}\subset B_2^n$ in the
high-degree regime $N\ge s-1$. Introducing a projection-based geometric
statistic -- the \emph{max--min projection separation} $\rho(Z,j)$ and
its minimum $\kappa(Z)=\min_j\rho(Z,j)$ -- we construct Lagrange
polynomials $Q_j\in\mathcal P_N^n$ with explicit coefficient bounds
$$
\|Q_j\|_\infty \lesssim s\Bigl(\frac{4n}{\rho(Z,j)}\Bigr)^{s-1}.
$$
These polynomials yield quantitative distance-to-span estimates for the
rows of $V_N(Z)$ and, as consequences,
$$
\sigma_{\min}(V_N(Z)) \gtrsim 
\frac{\kappa(Z)^{s-1}}{(4n)^{s-1} s\sqrt{s \nu(n,N)}},
\quad
\nu(n,N)={N+n\choose N},
$$
and an explicit right inverse $V_N(Z)^+$ with operator-norm control
$$
\|V_N(Z)^+\| \lesssim 
s^{3/2}\sqrt{\nu(n,N)}\Bigl(\frac{4n}{\kappa(Z)}\Bigr)^{s-1}.
$$
Our estimates are dimension-explicit and expressed directly in terms of
the local geometry parameter $\kappa(Z)$; they apply to \emph{every}
distinct node set $Z\subset B_2^n$ without any \emph{a priori} separation
assumptions. In particular, $V_N(Z)$ has full row rank whenever
$N\ge s-1$. The results complement the Fourier-type theory
(on the complex unit circle/torus), where lower bounds for
$\sigma_{\min}$ hinge on uniform separation or cluster structure; here
stability is quantified instead via high polynomial degree and the
projection geometry of $Z$.
\end{abstract}

\maketitle

\section{Introduction}

Multivariate polynomial interpolation is fundamental in approximation theory and
scientific computing. Given $Z=\{z_1,\dots,z_s\}\subset B_2^n\subset\mathbb{R}^n$ (for $s\ge 2$) and a degree bound $N$, the \emph{monomial} Vandermonde matrix
$V_N(Z)\in\mathbb{R}^{s\times \nu}$ with $\nu=\binom{N+n}{N}$ collects, in its
$i$-th row, the evaluations $ (z_i^\alpha )_{|\alpha|\le N}$. In high
dimensions the basis size grows quickly with $N$ and geometric relations among
the nodes may cause near dependencies, so the stability of $V_N(Z)$ is delicate.
Much of the literature on Fourier/torus matrices ensures stability via global
\emph{separation} or controlled cluster models; here we aim for \emph{separation‑free}
guarantees in the monomial basis.

\subsubsection*{Standing assumptions and notation.}

We use the graded monomial basis $\{z^\alpha:|\alpha|\le N\}$, multi‑indices
$\alpha=(\alpha_1,\dots,\alpha_n)\in\mathbb{Z}_{\ge0}^n$ with
$|\alpha|=\sum_{\ell=1}^n\alpha_\ell$ and $z^\alpha=\prod_{\ell=1}^n z_\ell^{\alpha_\ell}$,
and write $P(z)=\sum_{|\alpha|\le N}a_\alpha z^\alpha$ with the coefficient norm
$\|P\|_\infty=\max_{|\alpha|\le N}|a_\alpha|$. The space $\mathcal{P}_N^n$ has
dimension $\nu(n,N)=\binom{N+n}{N}$. The evaluation map $z\mapsto V_N(z)\in\mathbb{R}^{\nu}$
is the degree‑$N$ Veronese embedding; we use this only to fix the basis and dimension.
We normalize $Z\subset B_2^n$; a global rescaling of the ambient space merely rescales
columns of $V_N(Z)$ and only changes constants in our bounds.

\subsubsection*{Classical square case vs. high‑degree regime.}

Since $\nu(n,N)\sim N^n$, unique interpolation is the generic situation when
$s\approx \nu(n,N)$; ill‑posedness in the square case $s=\nu(n,N)$ occurs precisely
when $Z$ lies in the zero set of a nonzero polynomial of degree $\le N$
(equivalently, the rows of $V_N(Z)$ are linearly dependent). In this paper we
instead study the \emph{high‑degree} regime $N\ge s-1$, where $\nu(n,N)\gg s$ and
interpolation is non‑unique. Our question is: can one obtain explicit,
dimension‑dependent stability bounds for $V_N(Z)$ that hold for \emph{all} distinct
node sets, without any global separation assumptions?

\subsubsection*{Local projection geometry.}

For $j\in\{1,\dots,s\}$ define the \emph{max–min projection separation}
$$
\rho(Z,j)=\max_{\|v\|_2=1}\min_{i\ne j} |\langle v,z_j-z_i\rangle |,\quad
\kappa(Z)=\min_{1\le j\le s}\rho(Z,j) > 0.
$$
This statistic controls both the coefficients of suitable Lagrange polynomials and the
geometric separation of the rows of $V_N(Z)$ from the spans of the others.

\begin{rem}[Positivity of the projection separation]\label{rem:kappa-positive}
In the definition
$$
\rho(Z,j)
=\max_{\|v\|_2=1}\,\min_{i\ne j}|\langle v,z_j-z_i\rangle|,
\quad
\kappa(Z)=\min_{1\le j\le s}\rho(Z,j),
$$
we implicitly use that $\rho(Z,j)>0$ for each $j$, and hence $\kappa(Z)>0$, as soon
as the nodes $z_1,\dots,z_s$ are distinct. This is an immediate consequence of a
simple geometric observation.

Fix $j$ and set $u_i=z_j-z_i\in\mathbb{R}^n$ for $i\ne j$. Since the nodes are distinct,
each $u_i\neq 0$. Consider the continuous function
$$
f:S^{n-1}\to[0,\infty),\quad
f(v):=\min_{i\ne j}|\langle v,u_i\rangle|.
$$
Here $S^{n-1}=\{v\in\mathbb{R}^n:\|v\|_2=1\}$ is the unit sphere. For each $i\ne j$ the
zero set
$$
H_i:=\{v\in S^{n-1}:\langle v,u_i\rangle=0\}
$$
is a proper ``great subsphere'' of codimension~$1$, and hence has empty interior in
$S^{n-1}$. Consequently, the finite union $\bigcup_{i\ne j}H_i$ also has empty interior
and cannot equal the whole of $S^{n-1}$. Thus there exists $v\in S^{n-1}$ with
$\langle v,u_i\rangle\neq 0$ for all $i\ne j$, so
$$
f(v)=\min_{i\ne j}|\langle v,u_i\rangle|>0.
$$
Since $f$ is continuous and $S^{n-1}$ is compact, $f$ attains its maximum, and we
obtain
$$
\rho(Z,j)=\max_{\|v\|_2=1}f(v)\ge f(v)>0.
$$
Taking the minimum over $j$ shows that $\kappa(Z)=\min_j\rho(Z,j)>0$ whenever the
points of $Z$ are pairwise distinct. In particular, all expressions involving
$\kappa(Z)$ in the sequel are well-defined.
\end{rem}

\subsubsection*{Our contributions (informal).}

In the regime $N\ge s-1$ and for \emph{arbitrary} distinct $Z\subset B_2^n$, we prove:
\begin{itemize}
\item[(1)] \textbf{Coefficient‑controlled Lagrange polynomials.}
For each $j$ there exists $Q_j\in\mathcal{P}_N^n$ with $Q_j(z_i)=\delta_{ij}$ and
$$
\|Q_j\|_\infty \le s (\frac{4n}{\rho(Z,j)} )^{s-1}.
$$
\item[(2)] \textbf{Row–to–span separation and angles.}
If $W_j=\mathrm{span}\{V_N(z_i):i\ne j\}$, then
$$
\mathrm{dist} (V_N(z_j),W_j ) \ge\
\frac{\rho(Z,j)^{ s-1}}{(4n)^{ s-1} s \sqrt{\nu}},\quad
\sin\theta_j \ge \frac{\rho(Z,j)^{ s-1}}{(4n)^{ s-1} s \nu}.
$$
\item[(3)] \textbf{Spectral stability and an explicit right inverse.}
$$
\sigma_{\min} (V_N(Z) ) \ge\
\frac{\kappa(Z)^{ s-1}}{(4n)^{ s-1} s \sqrt{s \nu}},\quad
\exists V_N(Z)^+: \|V_N(Z)^+\| \le\
s^{3/2}\sqrt{\nu} (\frac{4n}{\kappa(Z)} )^{s-1}.
$$
In particular, $V_N(Z)$ has full row rank whenever $N\ge s-1$.
\end{itemize}
We make no \emph{a priori} separation assumptions; instead, our bounds
hold for every distinct node set, with constants expressed explicitly in
terms of the geometry parameter $\kappa(Z)$.

\subsubsection*{Relation to prior work.}

On the complex unit circle/torus, sharp lower bounds for the smallest singular value of
(partial) Fourier/Vandermonde matrices typically require \emph{separation} or a controlled
\emph{cluster} model; see, e.g.,
\cite{Moitra2015,KunisNagel2020LAA,KunisNagel2021NA,KunisNagelStrotmann2021,
BatenkovDemanetGoldmanYomdin2020,BatenkovDiederichsGoldmanYomdin2021,LiLiao2021}.
Classical conditioning results for monomial Vandermonde matrices (univariate / nodes in the
unit disk) include \cite{Gautschi1975,Pan2016,Bazan1999,AubelBoelcskei2019}.
Our bounds address a different regime: multivariate, separation-free, and
high-degree, with coefficient-controlled Lagrange polynomials and a
concrete right inverse in the monomial basis. In particular, we do not
assume any uniform lower bound on separation; all dependence on the
geometry of $Z$ enters explicitly through $\kappa(Z)$.

\subsubsection*{Organization.}

Section~\ref{sec:prelim} establishes the main technical tools: coefficient bounds
for univariate and multivariate Lagrange polynomials and the resulting
row-to-span and rank properties of $V_N(Z)$. 
Section~\ref{sec:spectral} applies these geometric estimates to the spectral
analysis, providing bounds on the smallest and largest singular values and hence
on the condition number of $V_N(Z)$.

\section{Preliminaries}\label{sec:prelim}

In this section, we establish foundational results that underpin our analysis of high-dimensional polynomial interpolation. We begin with coefficient bounds for univariate polynomials, which serve as a cornerstone for extending our findings to the multivariate setting. We then explore multivariate interpolating polynomials, leading to key insights into the geometric properties of the Vandermonde matrix and the interpolation problem in higher dimensions.

\subsection{Coefficient Bounds for Univariate Polynomials}

Let $\{t_1, t_2, \dots, t_s\}\subset [-1, 1] \subset \mathbb{R} $ be a set of distinct points with minimal separation $\kappa $:
$$
|t_i - t_j| \geq \kappa > 0 \quad \text{for all } i \neq j.
$$

Consider the one-dimensional interpolating polynomial $ p_y(t) $ of degree $ s - 1 $ that interpolates the data points $\{(t_i, y_i)\} $, where $ y = (y_1, y_2, \dots, y_s) $ is given. The polynomial $ p_y(t) $ can be expressed in the monomial basis as:
$$
p_y(t) = \sum_{k=0}^{s-1} c_k t^k.
$$

We establish the following bounds on the coefficients $ c_k $:

\begin{lem}\label{lem:univariate}
Under the above conditions, the coefficients of $ p_y(t) $ satisfy:
$$
|c_k| \leq \| y \|_\infty \cdot (s - k) \left( \frac{4}{\kappa}\right)^{s-1}.
$$
Moreover, the sum of the absolute values of the coefficients is bounded by:
$$
\sum_{k=0}^{s-1} |c_k| \leq \frac{s(s+1)}{2}\| y \|_\infty \left( \frac{4}{\kappa}\right)^{s-1}.
$$
\end{lem}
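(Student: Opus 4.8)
The plan is to use the Lagrange representation of the interpolant and bound the coefficients of each basis polynomial term by term. Write $\ell_i(t)=\prod_{m\ne i}\frac{t-t_m}{t_i-t_m}$, so that $p_y=\sum_{i=1}^s y_i\ell_i$ and therefore $|c_k|\le\|y\|_\infty\sum_{i=1}^s\bigl|[t^k]\ell_i\bigr|$, where $[t^k]$ denotes extraction of the coefficient of $t^k$. For a fixed $i$, the numerator $\prod_{m\ne i}(t-t_m)$ has, as its coefficient of $t^k$, the signed elementary symmetric function $(-1)^{s-1-k}e_{s-1-k}$ of the $s-1$ points $\{t_m\}_{m\ne i}$; since every $|t_m|\le 1$,
\[
\bigl|[t^k]\ell_i\bigr|=\frac{\bigl|e_{s-1-k}(\{t_m\}_{m\ne i})\bigr|}{\prod_{m\ne i}|t_i-t_m|}\le\frac{1}{\prod_{m\ne i}|t_i-t_m|}\binom{s-1}{s-1-k}=\frac{1}{\prod_{m\ne i}|t_i-t_m|}\binom{s-1}{k}.
\]

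It then remains to control the gap products $\prod_{m\ne i}|t_i-t_m|$ from below via the separation hypothesis and to sum over $i$. The simplest route uses the crude bound $\prod_{m\ne i}|t_i-t_m|\ge\kappa^{s-1}$, which already gives $|c_k|\le\|y\|_\infty\,s\binom{s-1}{k}\kappa^{-(s-1)}$; since $\binom{s-1}{k}\le 2^{s-1}$, $s\le 2^{s-1}$, and $s-k\ge 1$, this is $\le\|y\|_\infty(s-k)(4/\kappa)^{s-1}$, the claimed estimate (with considerable slack). If a sharper intermediate inequality is preferred, relabel so that $t_1<\cdots<t_s$; then $|t_i-t_m|\ge|i-m|\kappa$, hence $\prod_{m\ne i}|t_i-t_m|\ge\kappa^{s-1}(i-1)!\,(s-i)!$, and by the binomial theorem $\sum_{i=1}^s\frac{1}{(i-1)!\,(s-i)!}=\frac{2^{s-1}}{(s-1)!}$, so that $|c_k|\le\|y\|_\infty(2/\kappa)^{s-1}\big/\big(k!\,(s-1-k)!\big)$, again comfortably inside the stated bound.

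The second assertion follows immediately: summing $|c_k|\le\|y\|_\infty(s-k)(4/\kappa)^{s-1}$ over $k=0,\dots,s-1$ and using $\sum_{k=0}^{s-1}(s-k)=s+(s-1)+\cdots+1=\tfrac{s(s+1)}{2}$ gives $\sum_{k=0}^{s-1}|c_k|\le\tfrac{s(s+1)}{2}\|y\|_\infty(4/\kappa)^{s-1}$. There is no serious obstacle in this lemma; the only points requiring care are the identification of the coefficients of $\prod_{m\ne i}(t-t_m)$ with signed elementary symmetric polynomials and their estimation through $|t_m|\le 1$, and the selection of a lower bound for the gap products clean enough that the sum over $i$ collapses to a quantity directly comparable with $(s-k)(4/\kappa)^{s-1}$. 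The factor $(s-k)$ and the constant $4$ are deliberately loose; keeping them in this convenient form simplifies the multivariate application in the next subsection.
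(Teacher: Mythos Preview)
Your proof is correct but follows a genuinely different route from the paper. The paper expresses $p_y$ in Newton's divided-difference form,
\[
p_y(t)=\sum_{j=0}^{s-1}\Delta_j(t_1,\dots,t_{j+1})\prod_{i=1}^j(t-t_i),
\]
bounds $|\Delta_j|\le (2/\kappa)^j\|y\|_\infty$ by induction and the expansion coefficients of $\prod_{i=1}^j(t-t_i)$ by $\binom{j}{k}\le 2^j$, and then sums over $j=k,\dots,s-1$ to obtain $|c_k|\le\|y\|_\infty\sum_{j=k}^{s-1}(4/\kappa)^j\le\|y\|_\infty(s-k)(4/\kappa)^{s-1}$; here the factor $(s-k)$ arises naturally as the number of terms in the outer sum. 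Your Lagrange-form argument is more direct: you get $|c_k|\le\|y\|_\infty\, s\binom{s-1}{k}\kappa^{-(s-1)}$ in one step, and then pad via $s\le 2^{s-1}$, $\binom{s-1}{k}\le 2^{s-1}$, $s-k\ge 1$ to recover the stated form. Your approach is in fact the same one the paper itself adopts later in Lemma~\ref{lem:Qj} for the multivariate Lagrange polynomials, so it has the advantage of being uniform with the rest of the argument; the Newton route, on the other hand, explains where the $(s-k)$ shape comes from rather than inserting it artificially. Your optional sharper estimate via ordered nodes and $\sum_i 1/[(i-1)!(s-i)!]=2^{s-1}/(s-1)!$ is a nice refinement but not needed for the lemma as stated.
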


These bounds reflect how the separation of interpolation points influences the stability and conditioning of the interpolation problem.

\begin{proof}
The polynomial $ p_y(t) $ can be represented in Newton's form:
$$
p_y(t) = \sum_{j=0}^{s-1}\Delta_j(t_1, t_2, \ldots, t_{j+1}) \prod_{i=1}^{j} (t - t_i),
$$
where $\Delta_j(t_1, t_2, \ldots, t_{j+1}) $ denotes the $ j $-th order divided difference for the data points $\{t_1, t_2, \ldots, t_{j+1}\} $. Specifically, $\Delta_0(t_i) = y_i $, and for $ j \geq 1 $,
$$
\Delta_j(t_1, t_2, \ldots, t_{j+1}) = \frac{\Delta_{j-1}(t_2, t_3, \ldots, t_{j+1}) - \Delta_{j-1}(t_1, t_2, \ldots, t_j)}{t_{j+1} - t_1}.
$$

To connect the Newton form to the monomial form, we expand each term:
$$
\prod_{i=1}^{j} (t - t_i) = t^j + b_{j, j-1} t^{j-1} + b_{j, j-2} t^{j-2} + \cdots + b_{j, 0},
$$
where $ b_{j, k} $ is the coefficient of $ t^k $ in this expansion. The coefficient $ c_k $ of $ t^k $ in the monomial form is obtained by summing up the contributions from all terms in the Newton expansion that include $ t^k $:
$$
c_k = \sum_{j=k}^{s-1}\Delta_j(t_1, t_2, \ldots, t_{j+1}) \cdot b_{j, k}.
$$

The coefficients $ b_{j, k} $ can be explicitly described as:
$$
b_{j, k} = (-1)^{j-k}\sum_{1 \leq i_1 < i_2 < \cdots < i_{j-k}\leq j} t_{i_1} t_{i_2}\cdots t_{i_{j-k}}.
$$

Since each $ t_i \in [-1, 1] $, we have $ |t_i| \leq 1 $ for all $ i $. Therefore,
$$
|b_{j, k}| \leq \binom{j}{k},
$$
as the magnitude of any product of $ j - k $ terms is at most 1, and there are $\binom{j}{k} $ such products.

We proceed to bound $ |\Delta_j(t_1, t_2, \dots, t_{j+1})| $. For $ j = 0 $, $ |\Delta_0(t_i)| = |y_i| \leq \| y \|_\infty $. For $ j = 1 $,
$$
|\Delta_1(t_i, t_j)| = \left| \frac{y_j - y_i}{t_j - t_i}\right| \leq \frac{|y_j| + |y_i|}{|t_j - t_i|}\leq \frac{2 \| y \|_\infty}{\kappa},
$$
since $ |t_j - t_i| \geq \kappa $ and $ |y_i|, |y_j| \leq \| y \|_\infty $. By induction, for $ j = 0, 1, \dots, s-1 $, we have:
$$
|\Delta_j(t_1, t_2, \dots, t_{j+1})| \leq \frac{2^j \| y \|_\infty}{\kappa^j}.
$$

Combining the bounds on $ |\Delta_j| $ and $ |b_{j, k}| $, we have:
$$
|c_k| \leq \sum_{j=k}^{s-1}\frac{2^j \| y \|_\infty}{\kappa^j}\cdot \binom{j}{k}.
$$

Noting that $\binom{j}{k}\leq 2^j $ for $ 0 \leq k \leq j $, we obtain:
$$
|c_k| \leq \| y \|_\infty \sum_{j=k}^{s-1}\left( \frac{4}{\kappa}\right)^j.
$$

Since $\frac{4}{\kappa} > 1 $, the series grows geometrically, and we can bound it by:
$$
|c_k| \leq \| y \|_\infty \cdot (s - k) \left( \frac{4}{\kappa}\right)^{s-1}.
$$

Therefore,
$$
\sum_{k=0}^{s-1} |c_k| \leq \| y \|_\infty \left( \frac{4}{\kappa}\right)^{s-1}\sum_{k=0}^{s-1} (s - k)
= \frac{s(s+1)}{2}\| y \|_\infty \left( \frac{4}{\kappa}\right)^{s-1}.
$$
\end{proof}

\subsection{Extension to Multivariate Interpolating Polynomials}

Building on the univariate case, we consider multivariate interpolation. Let $ Z = \{ z_1, z_2, \dots, z_s \}\subset B_2^n \subset \mathbb{R}^n $ be a set of $ s $ distinct points within the unit ball, and let $ N \geq s - 1 $. Our goal is to construct interpolating polynomials $ Q_j \in \mathcal{P}_N^n $ satisfying:
$$
Q_j(z_i) = \delta_{ij}\quad \text{for } i = 1, \dots, s,
$$
where $\delta_{ij} $ is the Kronecker delta function.

We establish the following bound on the coefficients of $ Q_j $:

\begin{lem}\label{lem:Qj}
For each $j\in\{1,\dots,s\}$ there exists $Q_j\in\mathcal{P}_N^n$ with
$Q_j(z_i)=\delta_{ij}$ and whose coefficient vector in the monomial basis
satisfies
$$
\| Q_j \|_\infty \leq s \left( \frac{4 n}{\rho(Z, j)}\right)^{s - 1}.
$$
\end{lem}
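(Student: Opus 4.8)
The plan is to reduce the construction of $Q_j$ to a \emph{one-dimensional} interpolation problem on the line through $z_j$ that best separates $z_j$ from the remaining nodes, and then to read off the monomial coefficients of the pulled-back polynomial. Concretely, let $v_j\in S^{n-1}$ be a maximizer in the definition of $\rho(Z,j)$ (it exists by the compactness argument in Remark~\ref{rem:kappa-positive}), so that $|\langle v_j,z_j-z_i\rangle|\ge\rho(Z,j)$ for every $i\ne j$. Put $t_i:=\langle v_j,z_i\rangle$; by Cauchy--Schwarz together with $z_i\in B_2^n$, $\|v_j\|_2=1$ we have $|t_i|\le 1$, and $|t_j-t_i|\ge\rho(Z,j)>0$ for all $i\ne j$. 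Define the univariate polynomial
\[
p(t):=\prod_{i\ne j}\frac{t-t_i}{t_j-t_i},\qquad \deg p\le s-1\le N,
\]
which is well defined since every denominator is nonzero, and which satisfies $p(t_j)=1$ and $p(t_i)=0$ for all $i\ne j$ (crucially, the product still vanishes at any $t_i$ even if several of the $t_i$ with $i\ne j$ happen to coincide). Setting $Q_j(z):=p(\langle v_j,z\rangle)$ then gives $Q_j\in\mathcal{P}_N^n$ with $Q_j(z_i)=p(t_i)=\delta_{ij}$, as required; it remains only to bound its coefficients.

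Second, I would bound the monomial coefficients of $p$. Writing $\prod_{i\ne j}(t-t_i)=\sum_{k=0}^{s-1}(-1)^{s-1-k}e_{s-1-k}\big(\{t_i\}_{i\ne j}\big)\,t^k$ and using $|t_i|\le 1$, the elementary symmetric function $e_{s-1-k}$ is a sum of $\binom{s-1}{s-1-k}=\binom{s-1}{k}$ products of moduli at most $1$, so the coefficient of $t^k$ in the numerator has modulus at most $\binom{s-1}{k}\le 2^{s-1}$; the denominator $\prod_{i\ne j}|t_j-t_i|$ is at least $\rho(Z,j)^{s-1}$. Hence $p(t)=\sum_{k=0}^{s-1}c_k t^k$ with $|c_k|\le 2^{s-1}/\rho(Z,j)^{s-1}$ for every $k$. (Morally this is the univariate coefficient estimate of Lemma~\ref{lem:univariate} with $\|y\|_\infty=1$ and $\kappa$ replaced by $\rho(Z,j)$, obtained here directly so as to avoid any pairwise-separation hypothesis; it is in fact slightly sharper, accounting for the slack between $2$ and the stated $4$.)

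Third, I would transfer this bound through the substitution $t\mapsto\langle v_j,z\rangle=\sum_{\ell=1}^n v_{j,\ell}z_\ell$. By the multinomial theorem, $(\langle v_j,z\rangle)^k=\sum_{|\beta|=k}\binom{k}{\beta}v_j^{\,\beta}z^\beta$, so for a multi-index $\beta$ with $|\beta|=k\le s-1$ the coefficient of $z^\beta$ in $Q_j$ is $c_k\binom{k}{\beta}v_j^{\,\beta}$. Now $|v_j^{\,\beta}|\le\prod_\ell|v_{j,\ell}|^{\beta_\ell}\le 1$ since $|v_{j,\ell}|\le\|v_j\|_2=1$, while $\binom{k}{\beta}\le\sum_{|\gamma|=k}\binom{k}{\gamma}=n^k\le n^{s-1}$ (here $n\ge 1$). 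Therefore
\[
\|Q_j\|_\infty\le\max_{0\le k\le s-1}|c_k|\cdot n^{s-1}\le\Big(\frac{2n}{\rho(Z,j)}\Big)^{s-1}\le s\Big(\frac{4n}{\rho(Z,j)}\Big)^{s-1},
\]
the last inequality being trivial, which is the claimed bound.

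The one genuine subtlety — and the step I expect to need the most care — is the possible \emph{coincidence} of the projected values $t_i=\langle v_j,z_i\rangle$ for distinct $i\ne j$, which really can occur at the optimal direction (already for configurations like three vertices of a right triangle). This is precisely why I would not invoke a univariate interpolation result that presupposes full pairwise separation of the nodes: the Lagrange-basis-polynomial product above only requires $t_j\ne t_i$, which is exactly what the max--min direction guarantees, and it absorbs repeated values automatically. Everything else is routine bookkeeping: Cauchy--Schwarz to control $|t_i|$, the elementary-symmetric bound, and the multinomial blow-up $n^k$, which is where the factor $n$ inside $4n/\rho(Z,j)$ originates.
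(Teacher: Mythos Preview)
Your proof is correct and follows essentially the same route as the paper: choose the optimal direction $v_j$, form the univariate Lagrange basis polynomial $p(t)=\prod_{i\ne j}(t-t_i)/(t_j-t_i)$, bound its coefficients via elementary symmetric functions, and then push through the multinomial expansion of $(\langle v_j,z\rangle)^k$ to pick up the factor $n^{s-1}$. Your explicit discussion of the possible coincidence of the projected values $t_i$ for $i\ne j$ is a worthwhile clarification that the paper leaves implicit.
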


\begin{proof}
Fix $j$ and choose a unit vector $v\in\mathbb{R}^n$ such that
$|\langle v,z_j-z_i\rangle|\ge \rho(Z,j)$ for all $i\ne j$.
Let $t_i=\langle v,z_i\rangle\in[-1,1]$ and define the univariate Lagrange
basis polynomial
$$
p_j(t)=\prod_{i\ne j}\frac{t-t_i}{t_j-t_i}=\sum_{k=0}^{s-1}a_{j,k} t^k. 
$$
Because $|t_j-t_i|\ge\rho(Z,j)$ we have
$\prod_{i\ne j}|t_j-t_i|\ge \rho(Z,j)^{s-1}$.
Expanding the numerator $\prod_{i\ne j}(t-t_i)=\sum_{k=0}^{s-1}A_k t^k$,
the coefficients are $A_k=(-1)^{s-1-k}e_{s-1-k}(t_{-j})$, where $e_m$
is the degree-$m$ elementary symmetric sum of $\{t_i\}_{i\ne j}$.
Since $|t_i|\le 1$, $|e_m|\le \binom{s-1}{m}$, hence
$$
|a_{j,k}|=\frac{|A_k|}{\prod_{i\ne j}|t_j-t_i|}
\le \frac{\binom{s-1}{k}}{\rho(Z,j)^{s-1}}
\le \frac{2^{ s-1}}{\rho(Z,j)^{s-1}}\quad(0\le k\le s-1).
$$

Now set $Q_j(z):=p_j(\langle v,z\rangle)$, so $Q_j(z_i)=\delta_{ij}$ and
$\deg Q_j\le s-1\le N$.
By the multinomial theorem,
$$
Q_j(z)=\sum_{k=0}^{s-1} a_{j,k} (\sum_{\ell=1}^n v_\ell z_\ell )^k
 =\sum_{|\alpha|\le s-1} c_{j,\alpha} z^\alpha,\quad
c_{j,\alpha}=a_{j,k}\binom{k}{\alpha}v^\alpha, k=|\alpha|.
$$
Because $|v_\ell|\le 1$ we have $|v^\alpha|\le 1$, and for any fixed $k$,
$$
\binom{k}{\alpha}\le \sum_{|\beta|=k}\binom{k}{\beta}=n^k. 
$$
Therefore, for $k=|\alpha|\le s-1$,
$$
|c_{j,\alpha}|\le |a_{j,k}| \binom{k}{\alpha}
\le \frac{\binom{s-1}{k} n^k}{\rho(Z,j)^{s-1}}
\le \frac{2^{ s-1} n^{ s-1}}{\rho(Z,j)^{s-1}}
= (\frac{2n}{\rho(Z,j)} )^{s-1}.
$$
Taking the maximum over $|\alpha|\le N$ yields
$$
\|Q_j\|_\infty \le (\frac{2n}{\rho(Z,j)} )^{s-1}
\le s (\frac{4n}{\rho(Z,j)} )^{s-1},
$$
since $1\le s 2^{ s-1}$ for all $s\ge 1$.
\end{proof}

\subsection{Geometric Properties of the Vandermonde Matrix}

The Vandermonde matrix $ V_N(Z) $ plays a central role in analyzing the interpolation problem. Understanding the geometric relationships between its rows and the subspaces they span is crucial for assessing the conditioning and stability of the interpolation. The following result indicates that the rows of the Vandermonde matrix are well-separated from the subspaces spanned by the other rows, which is essential for the invertibility and conditioning of $ V_N(Z) $.

\begin{thm}\label{thm:distance}
Let $ N \geq s - 1 $. The distance between the row $ V_N(z_j) $ and the subspace
$ W_j $ spanned by $\{ V_N(z_i) : i \neq j \} $ satisfies
$$
\operatorname{dist}(V_N(z_j), W_j) \geq
\frac{\rho(Z, j)^{s - 1}}{(4 n)^{s - 1} s \sqrt{\nu}}.
$$
\end{thm}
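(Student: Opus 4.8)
The plan is to dualize the problem: the coefficient vector of the Lagrange polynomial $Q_j$ produced by Lemma~\ref{lem:Qj} is, by construction, a vector in $\mathbb{R}^{\nu}$ that is orthogonal to every row $V_N(z_i)$ with $i\neq j$ and pairs to $1$ with $V_N(z_j)$; its Euclidean norm therefore bounds the distance from below, and Lemma~\ref{lem:Qj} bounds that norm from above.

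Concretely, I would first let $q_j\in\mathbb{R}^{\nu}$ be the coefficient vector of $Q_j$ in the graded monomial basis, so that $\langle q_j,V_N(z)\rangle=Q_j(z)$ for all $z$. The interpolation conditions $Q_j(z_i)=\delta_{ij}$ then say exactly that $\langle q_j,V_N(z_i)\rangle=0$ for every $i\neq j$, i.e. $q_j\perp W_j$, and $\langle q_j,V_N(z_j)\rangle=1$. Writing $V_N(z_j)=P_{W_j}V_N(z_j)+w$ with $w$ the orthogonal component, so that $\|w\|_2=\operatorname{dist}(V_N(z_j),W_j)$, the orthogonality $q_j\perp W_j$ gives
$$
1=\langle q_j,V_N(z_j)\rangle=\langle q_j,w\rangle\le\|q_j\|_2\,\|w\|_2=\|q_j\|_2\cdot\operatorname{dist}(V_N(z_j),W_j),
$$
hence $\operatorname{dist}(V_N(z_j),W_j)\ge 1/\|q_j\|_2$.

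It then remains to bound $\|q_j\|_2$. Since $q_j$ has $\nu=\nu(n,N)$ coordinates, the crude comparison $\|q_j\|_2\le\sqrt{\nu}\,\|q_j\|_\infty=\sqrt{\nu}\,\|Q_j\|_\infty$ together with Lemma~\ref{lem:Qj} yields $\|q_j\|_2\le s\sqrt{\nu}\,(4n/\rho(Z,j))^{s-1}$, and substituting gives
$$
\operatorname{dist}(V_N(z_j),W_j)\ge\frac{1}{s\sqrt{\nu}\,(4n/\rho(Z,j))^{s-1}}=\frac{\rho(Z,j)^{s-1}}{(4n)^{s-1}\,s\,\sqrt{\nu}},
$$
as claimed. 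There is essentially no obstacle beyond the conceptual step of passing to the dual coefficient vector and reading the interpolation conditions as orthogonality relations; the remaining ingredients are just Cauchy--Schwarz and the $\ell^\infty\!\to\!\ell^2$ bound, the latter being where the factor $\sqrt{\nu}$ in the statement comes from (this is deliberately not optimized, since $Q_j$ in fact has degree $\le s-1$).
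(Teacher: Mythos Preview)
Your proof is correct and essentially identical to the paper's: both take the coefficient vector of the Lagrange polynomial $Q_j$ from Lemma~\ref{lem:Qj}, read the interpolation conditions as orthogonality to $W_j$ together with $\langle q_j,V_N(z_j)\rangle=1$, deduce $\operatorname{dist}(V_N(z_j),W_j)\ge 1/\|q_j\|_2$ via Cauchy--Schwarz, and then bound $\|q_j\|_2\le\sqrt{\nu}\,\|Q_j\|_\infty$. Your parenthetical remark that the $\sqrt{\nu}$ is not sharp (since $Q_j$ has degree $\le s-1$) is a nice observation the paper does not make explicit.
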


\begin{proof}
Take the interpolating polynomial $Q_j$ of Lemma \ref{lem:Qj}. It satisfies 
$$
Q_j(z_i) = \delta_{ij}\quad \text{for } i = 1, \dots, s,
$$
where $\delta_{ij} $ is the Kronecker delta. Let $ c_j \in \mathbb{R}^\nu $ be the
coefficient vector of $ Q_j $ in the monomial basis:
$$
Q_j(z) = \sum_{|\alpha| \leq N} c_{j,\alpha} z^\alpha.
$$

The interpolation conditions can be written in matrix form as
$$
V_N(Z) c_j = e_j,
$$
where $ e_j $ is the $ j $-th standard basis vector in $\mathbb{R}^s $. Thus,
for every $i\neq j$ we have $\langle c_j, V_N(z_i)\rangle = 0$, so $ c_j $ is
orthogonal to $ W_j := \mathrm{span}\{V_N(z_i) : i\neq j\}$.

Since $c_j$ is orthogonal to $W_j$, for any vector $x\in\mathbb{R}^\nu$ we obtain
$$
\operatorname{dist}(x,W_j) \ge \frac{|\langle c_j,x\rangle|}{\|c_j\|_2}.
$$
Applying this to $x=V_N(z_j)$ and using $\langle c_j,V_N(z_j)\rangle = 1$ gives
$$
\operatorname{dist}(V_N(z_j), W_j) \ge \frac{1}{\| c_j \|_2}.
$$

From Lemma \ref{lem:Qj}, we also have the coefficient bound
$$
\| Q_j \|_\infty = \max_{|\alpha| \leq N} | c_{j,\alpha} |
\leq s \left( \frac{4 n}{\rho(Z, j)}\right)^{s - 1}.
$$
Since $ c_j \in \mathbb{R}^\nu $, its Euclidean norm can be bounded by
$$
\| c_j \|_2 \leq \sqrt{\nu} \| c_j \|_\infty,
$$
where $\nu = \nu(n, N) = \binom{N + n}{N} $ is the number of monomials of
degree at most $ N $. Thus,
$$
\| c_j \|_2 \leq \sqrt{\nu}\cdot s \left( \frac{4 n}{\rho(Z, j)}\right)^{s - 1}.
$$

Combining this with the previous inequality, we obtain
$$
\operatorname{dist}(V_N(z_j), W_j)
 \geq \frac{1}{\|c_j\|_2}
 \geq \frac{\rho(Z, j)^{s - 1}}{(4 n)^{s - 1} s \sqrt{\nu}},
$$
which is the desired estimate.
\end{proof}

Let $ U $ be a subspace, and let $\mathbf{x} $ be a non-zero vector. The angle $\theta $ between the vector $\mathbf{x} $ and the subspace $ U $ is defined as the smallest angle between $\mathbf{x} $ and any non-zero vector in $ U $. Leveraging the concept of orthogonal projection, the angle can be expressed in terms of the distance from $\mathbf{x} $ to $ U $:
$$
\theta = \arcsin\left( \frac{ \operatorname{dist}(\mathbf{x}, U) }{ \| \mathbf{x}\|_2 }\right),
$$
where $\operatorname{dist}(\mathbf{x}, U) $ is the Euclidean distance from $\mathbf{x} $ to the subspace $ U $.

\begin{cor}\label{cor:angle}
The angle $\theta_j $ between the row $ V_N(z_j) $ and the subspace $ W_j $ satisfies:
$$
\sin \theta_j \geq \frac{\rho(Z, j)^{s - 1}}{(4 n)^{s - 1} s \nu}.
$$
\end{cor}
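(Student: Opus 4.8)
The plan is to combine the distance bound of Theorem~\ref{thm:distance} with the elementary identity
$$
\sin\theta_j=\frac{\operatorname{dist}(V_N(z_j),W_j)}{\|V_N(z_j)\|_2},
$$
which is exactly the displayed formula for the vector--subspace angle applied to $\mathbf{x}=V_N(z_j)$ and $U=W_j$. Since $\arcsin$ is increasing on $[0,1]$, a lower bound on the ratio yields a lower bound on $\sin\theta_j$, so it suffices to lower-bound the numerator and upper-bound the denominator separately.

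The numerator is handled directly by Theorem~\ref{thm:distance}, which gives
$$
\operatorname{dist}(V_N(z_j),W_j)\ge \frac{\rho(Z,j)^{s-1}}{(4n)^{s-1}s\sqrt{\nu}}.
$$
For the denominator, I would use the normalization $z_j\in B_2^n$: from $\|z_j\|_2\le 1$ we get $|z_{j,\ell}|\le 1$ for every coordinate $\ell$, hence $|z_j^\alpha|=\prod_{\ell=1}^n|z_{j,\ell}|^{\alpha_\ell}\le 1$ for every multi-index $\alpha$. Therefore
$$
\|V_N(z_j)\|_2^2=\sum_{|\alpha|\le N}|z_j^\alpha|^2\le \nu,
$$
so $\|V_N(z_j)\|_2\le\sqrt{\nu}$.

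Dividing the two estimates gives
$$
\sin\theta_j\ge \frac{1}{\sqrt{\nu}}\cdot\frac{\rho(Z,j)^{s-1}}{(4n)^{s-1}s\sqrt{\nu}}
=\frac{\rho(Z,j)^{s-1}}{(4n)^{s-1}s\,\nu},
$$
which is the claimed inequality. There is no real obstacle here: the only point requiring a (one-line) argument is the uniform monomial bound $|z_j^\alpha|\le 1$ on $B_2^n$, and the rest is monotonicity of $\arcsin$ together with the already-established Theorem~\ref{thm:distance}; the loss of one further factor $\sqrt{\nu}$ compared to the distance bound is exactly the price of dividing by $\|V_N(z_j)\|_2\le\sqrt{\nu}$.
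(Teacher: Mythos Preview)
Your proof is correct and follows essentially the same approach as the paper: both use the identity $\sin\theta_j=\operatorname{dist}(V_N(z_j),W_j)/\|V_N(z_j)\|_2$, invoke Theorem~\ref{thm:distance} for the numerator, and bound the denominator by $\sqrt{\nu}$ via $|z_j^\alpha|\le 1$ on $B_2^n$. The arguments are virtually identical.
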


The lower bound on $\sin \theta_j $ ensures that the rows are not nearly parallel to the subspaces spanned by the others, which would otherwise lead to ill-conditioning.

\begin{proof}
Note that we have: 
$$
\sin \theta_j = \frac{ \operatorname{dist}(V_N(z_j), W_j) }{ \| V_N(z_j) \|_2 }.
$$

From Theorem \ref{thm:distance}, we have the lower bound:
$$
\operatorname{dist}(V_N(z_j), W_j) \geq \frac{\rho(Z, j)^{s - 1}}{(4 n)^{s - 1} s \sqrt{\nu}}.
$$

Each entry of $ V_N(z_j) $ is $ (V_N(z_j))_i = z_j^{\alpha_i} $, where $\alpha_i$ corresponds to the monomial associated with $ V_N(z_j) $, and $ z_j \in B_2^n $ satisfies $\|z_j\|_2 \leq 1 $. Therefore, $ | (V_N(z_j))_i | \leq 1 $ for all $ i $. The Euclidean norm of $ V_N(z_j) $ is thus bounded by:
$$
\| V_N(z_j) \|_2 = \left( \sum_{i=1}^\nu | (V_N(z_j))_i |^2 \right)^{1/2}\leq \sqrt{\nu}.
$$

Substituting the bounds into the expression for $\sin \theta_j $, we obtain:
$$
\sin \theta_j \geq \frac{\rho(Z, j)^{s - 1}}{(4 n)^{s - 1} s \nu}.
$$
\end{proof}

\subsection{Existence and Bounds for Interpolating Polynomials}

We now establish the existence of interpolating polynomials in $\mathcal{P}_N^n $ with bounded coefficients, emphasizing the role of geometric separation in controlling the norms. The following result assures that interpolation is always possible under the given conditions, and the norms of the interpolating polynomials are controlled by the geometric properties of $ Z $.

\begin{thm}\label{thm:existence}
Let $ Z \subset B_2^n $ be a set of $ s $ distinct points with minimal projection
separation $\kappa(Z) > 0 $, and let $ N \geq s - 1 $. Then, for any data
$ y = (y_1, \dots, y_s) $, there exists a polynomial $ P \in \mathcal{P}_N^n $
satisfying $ P(z_i) = y_i $ for $ i = 1, \dots, s $, with
$$
\|P\|_\infty \leq s^2 \| y \|_\infty \left( \frac{4n}{\kappa(Z)}\right)^{s-1}.
$$
Moreover, all such polynomials form an affine subspace $ L(Z, y) $ of dimension at least
$$
\nu(n, N) - s.
$$
\end{thm}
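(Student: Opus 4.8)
The plan is to build $P$ by superposition of the coefficient-controlled Lagrange polynomials $Q_j$ supplied by Lemma \ref{lem:Qj}, and then to identify the solution set with a coset of the kernel of $V_N(Z)$.

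First I would set $P:=\sum_{j=1}^s y_j Q_j$. Each $Q_j\in\mathcal P_N^n$, so $P\in\mathcal P_N^n$; and since $Q_j(z_i)=\delta_{ij}$, we get $P(z_i)=\sum_j y_j\delta_{ij}=y_i$ for all $i$, so $P$ interpolates the data. For the coefficient bound, work in the monomial basis and use the triangle inequality on the coefficient vectors: $\|P\|_\infty\le\sum_{j=1}^s |y_j|\,\|Q_j\|_\infty\le\|y\|_\infty\sum_{j=1}^s\|Q_j\|_\infty$. Each $\|Q_j\|_\infty\le s\bigl(4n/\rho(Z,j)\bigr)^{s-1}$ by Lemma \ref{lem:Qj}, and since $\rho(Z,j)\ge\kappa(Z)$ for every $j$ by definition of $\kappa(Z)$, we have $\|Q_j\|_\infty\le s\bigl(4n/\kappa(Z)\bigr)^{s-1}$. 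Summing over the $s$ indices gives $\|P\|_\infty\le s^2\|y\|_\infty\bigl(4n/\kappa(Z)\bigr)^{s-1}$, which is the claimed estimate.

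For the dimension statement, observe that a polynomial $\widetilde P\in\mathcal P_N^n$ with coefficient vector $\widetilde c\in\mathbb R^\nu$ interpolates the data $y$ if and only if $V_N(Z)\widetilde c=y$. Hence the set of valid coefficient vectors is the affine subspace $\widetilde c_0+\ker V_N(Z)$, where $\widetilde c_0$ is any particular solution (e.g.\ the coefficient vector of the $P$ just constructed, which certifies that the solution set is nonempty). Since $V_N(Z)\colon\mathbb R^\nu\to\mathbb R^s$, the rank-nullity theorem gives $\dim\ker V_N(Z)=\nu-\operatorname{rank}V_N(Z)\ge\nu-s$, because the rank is at most $s$. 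Transporting back to polynomials, $L(Z,y)$ is an affine subspace of $\mathcal P_N^n$ of dimension $\dim\ker V_N(Z)\ge\nu(n,N)-s$.

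There is essentially no serious obstacle here — the argument is a direct assembly of Lemma \ref{lem:Qj} and linear algebra. The only point worth a word of care is that the dimension bound is stated as an inequality precisely because it does not rely on full row rank: even without invoking Theorem \ref{thm:distance} (which would give $\operatorname{rank}V_N(Z)=s$ and hence equality $\dim L(Z,y)=\nu-s$), the crude bound $\operatorname{rank}V_N(Z)\le s$ already yields $\dim\ker V_N(Z)\ge\nu-s$, so the stated conclusion holds unconditionally. I would remark in passing that under the hypothesis $N\ge s-1$ one in fact has equality, by Theorem \ref{thm:distance}, but this is not needed for the statement as written.
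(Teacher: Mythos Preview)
Your proof is correct and follows essentially the same route as the paper: define $P=\sum_j y_j Q_j$ using the Lagrange polynomials from Lemma~\ref{lem:Qj}, bound $\|P\|_\infty$ by the triangle inequality together with $\rho(Z,j)\ge\kappa(Z)$, and identify $L(Z,y)$ with a coset of $\ker V_N(Z)$ to obtain the dimension bound via rank--nullity. Your closing remark that equality in fact holds (by full row rank) is also in the spirit of the paper, though there it is recorded separately as Lemma~\ref{lem:full-rank} rather than via Theorem~\ref{thm:distance}.
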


\begin{proof}
Since $N \ge s-1$, Lemma~\ref{lem:Qj} provides, for each $j\in\{1,\dots,s\}$,
a polynomial $Q_j\in\mathcal{P}_N^n$ such that
$$
Q_j(z_i) = \delta_{ij}\quad\text{for } i=1,\dots,s,
$$
and whose coefficient vector in the monomial basis satisfies
$$
\|Q_j\|_\infty \le s\Bigl(\frac{4n}{\rho(Z,j)}\Bigr)^{s-1}.
$$
Since $\kappa(Z) = \min_{1\le j\le s} \rho(Z,j)$, we have
$$
\|Q_j\|_\infty \le s\Bigl(\frac{4n}{\kappa(Z)}\Bigr)^{s-1}
\quad\text{for all }j=1,\dots,s.
$$

Given data $y=(y_1,\dots,y_s)$, define
$$
P(z) := \sum_{j=1}^s y_j Q_j(z).
$$
Then for each $i=1,\dots,s$,
$$
P(z_i) = \sum_{j=1}^s y_j Q_j(z_i)
 = \sum_{j=1}^s y_j \delta_{ij}
 = y_i,
$$
so $P$ is an interpolating polynomial.

Write $Q_j(z) = \sum_{|\alpha|\le N} c_{j,\alpha} z^\alpha$ and
$P(z) = \sum_{|\alpha|\le N} a_\alpha z^\alpha$. Then
$$
a_\alpha = \sum_{j=1}^s y_j c_{j,\alpha},
$$
and hence
$$
|a_\alpha|
 \le \sum_{j=1}^s |y_j| |c_{j,\alpha}|
 \le \|y\|_\infty \sum_{j=1}^s |c_{j,\alpha}|
 \le \|y\|_\infty \sum_{j=1}^s \|Q_j\|_\infty.
$$
Using the uniform bound on $\|Q_j\|_\infty$ gives
$$
|a_\alpha|
 \le \|y\|_\infty \cdot s \cdot
 \max_{1\le j\le s}\|Q_j\|_\infty
 \le \|y\|_\infty \cdot s \cdot
 s\Bigl(\frac{4n}{\kappa(Z)}\Bigr)^{s-1}
 = s^2 \|y\|_\infty\Bigl(\frac{4n}{\kappa(Z)}\Bigr)^{s-1}.
$$
Taking the maximum over all $|\alpha|\le N$ yields
$$
\|P\|_\infty = \max_{|\alpha|\le N} |a_\alpha|
 \le s^2 \|y\|_\infty\Bigl(\frac{4n}{\kappa(Z)}\Bigr)^{s-1},
$$
which proves the stated coefficient bound.

For the dimension statement, let $V_N(Z)$ denote the Vandermonde matrix.
The map
$$
\mathcal{P}_N^n \longrightarrow \mathbb{R}^s,\quad
P \longmapsto (P(z_1),\dots,P(z_s))
$$
is linear, and in the monomial basis its matrix is $V_N(Z)$.
Thus the set of interpolants
$$
L(Z,y) := \{P\in\mathcal{P}_N^n : P(z_i)=y_i, i=1,\dots,s\}
$$
is either empty or an affine translate of $\ker V_N(Z)$.
We have already exhibited one interpolant $P$, so $L(Z,y)\neq\emptyset$
and in fact $L(Z,y)$ is an affine subspace with
$$
\dim L(Z,y) = \dim\ker V_N(Z)
 = \nu(n,N) - \operatorname{rank} V_N(Z).
$$
Since $V_N(Z)$ has $s$ rows, $\operatorname{rank} V_N(Z)\le s$, and hence
$$
\dim L(Z,y) \ge \nu(n,N) - s.
$$
This completes the proof.
\end{proof}

\subsection{Full Rank and Invertibility of the Vandermonde Matrix}

Understanding the rank and invertibility of $ V_N(Z) $ is vital for ensuring the solvability of the interpolation problem.

\begin{lem}[Full row rank]\label{lem:full-rank}
Let $Z=\{z_1,\dots,z_s\}\subset B_2^n$ be $s$ distinct points and let $N\ge s-1$.
Then the Vandermonde matrix $V_N(Z)$ has full row rank:
$$
\operatorname{rank}(V_N(Z))=s.
$$
In particular, the rows of $V_N(Z)$ are linearly independent.
\end{lem}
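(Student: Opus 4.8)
The plan is to deduce full row rank as an immediate corollary of the work already done. Recall that the rank of $V_N(Z)$ equals the dimension of the row space, so it suffices to show that the $s$ rows $V_N(z_1),\dots,V_N(z_s)$ are linearly independent. I would argue by contradiction: if they were dependent, then some row, say $V_N(z_j)$, would lie in the span $W_j=\mathrm{span}\{V_N(z_i):i\ne j\}$, which forces $\operatorname{dist}(V_N(z_j),W_j)=0$.

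The key step is to confront this with Theorem~\ref{thm:distance}, which gives
$$
\operatorname{dist}(V_N(z_j),W_j)\ \ge\ \frac{\rho(Z,j)^{s-1}}{(4n)^{s-1}\,s\,\sqrt{\nu}}\ >\ 0,
$$
the strict positivity coming from Remark~\ref{rem:kappa-positive}, which guarantees $\rho(Z,j)>0$ for distinct nodes (and $\nu=\nu(n,N)$ is a finite positive integer, $4n\ge4$). This contradiction rules out any linear dependence among the rows, so they are linearly independent and $\operatorname{rank}(V_N(Z))=\min(s,\nu)=s$; note $s\le\nu(n,N)$ holds automatically since $N\ge s-1$ implies $\nu(n,N)=\binom{N+n}{N}\ge N+1\ge s$.

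Alternatively — and perhaps cleaner to present — one can invoke Theorem~\ref{thm:existence}: the evaluation map $P\mapsto(P(z_1),\dots,P(z_s))$ has matrix $V_N(Z)$ and is surjective, since for any $y\in\mathbb R^s$ the polynomial $P=\sum_j y_j Q_j$ built from the Lagrange polynomials of Lemma~\ref{lem:Qj} interpolates $y$. A linear map onto $\mathbb R^s$ has rank $s$, hence $\operatorname{rank}(V_N(Z))=s$. I would likely state the first (distance-based) argument as the main proof and mention the second as a remark.

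There is essentially no obstacle here: the substantive content — the coefficient-controlled Lagrange polynomials and the resulting strictly positive distance-to-span bound — is already established. The only point requiring a moment's care is confirming $s\le\nu(n,N)$ so that ``full row rank'' is the correct notion (rather than full column rank), but this follows trivially from $N\ge s-1$.
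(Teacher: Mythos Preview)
Your proposal is correct. Your alternative argument---surjectivity of the evaluation map via the Lagrange polynomials $Q_j$ of Lemma~\ref{lem:Qj}---is exactly the paper's proof: the paper assembles the coefficient vectors $c_j$ of the $Q_j$ into a matrix $C=[c_1\cdots c_s]$ and observes that $V_N(Z)C=I_s$, so $V_N(Z)$ has a right inverse and hence rank $s$. Your primary (distance-based) route is a harmless detour through Theorem~\ref{thm:distance}, which itself rests on the same $Q_j$; either argument is fine, but the paper's direct right-inverse formulation is the cleaner one to present.
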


\begin{proof}
By Lemma~\ref{lem:Qj}, for each $j\in\{1,\dots,s\}$ there exists a polynomial
$Q_j\in\mathcal P_N^n$ with $Q_j(z_i)=\delta_{ij}$. Let $c_j\in\mathbb R^\nu$
denote the coefficient vector of $Q_j$ in the monomial basis, and set
$C=[ c_1 \cdots c_s ]\in\mathbb R^{\nu\times s}$. Then
$$
V_N(Z) C
= [ V_N(Z)c_1 \cdots V_N(Z)c_s ]
= [ e_1 \cdots e_s ]
=I_s.
$$
Hence $V_N(Z)$ admits a right inverse and therefore
$\operatorname{rank}(V_N(Z))=s$. This also implies that its $s$ rows are
linearly independent.
\end{proof}

Next, we study the right inverse of the vandermonde matrix. The next result confirms that the interpolation problem is well-posed in terms of solvability.

\begin{thm}\label{thm:rightinverse}
Let $Z=\{z_1,\dots,z_s\}\subset B_2^n$ consist of $s$ distinct points and let
$N\ge s-1$. Then there exists a right inverse $ V_N(Z)^+ \in \mathbb{R}^{\nu \times s} $ such that
$$
V_N(Z) \cdot V_N(Z)^+ = I_s,
$$
where $ I_s $ is the identity matrix of size $ s \times s $. Moreover, we have
$$
\| V_N(Z)^+ \| \leq s^{3/2}\cdot \sqrt{\nu}\left( \frac{4n}{\kappa(Z)}\right)^{s - 1}.
$$
\end{thm}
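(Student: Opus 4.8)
The plan is to take the explicit right inverse already produced in the proof of Lemma~\ref{lem:full-rank}. Recall that for each $j$, Lemma~\ref{lem:Qj} gives a polynomial $Q_j\in\mathcal{P}_N^n$ with $Q_j(z_i)=\delta_{ij}$ and coefficient vector $c_j\in\mathbb{R}^\nu$ satisfying $\|Q_j\|_\infty=\|c_j\|_\infty\le s\bigl(4n/\rho(Z,j)\bigr)^{s-1}$. Setting $V_N(Z)^+:=C=[\,c_1\ \cdots\ c_s\,]\in\mathbb{R}^{\nu\times s}$, the interpolation identities $V_N(Z)c_j=e_j$ assemble into $V_N(Z)C=I_s$, so $C$ is a genuine right inverse; it only remains to estimate its operator norm.

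For the norm bound, first I would pass from the $\ell^\infty$ coefficient bound to an $\ell^2$ bound on each column: since $c_j\in\mathbb{R}^\nu$,
$$
\|c_j\|_2\le\sqrt{\nu}\,\|c_j\|_\infty\le\sqrt{\nu}\,s\Bigl(\frac{4n}{\rho(Z,j)}\Bigr)^{s-1}\le\sqrt{\nu}\,s\Bigl(\frac{4n}{\kappa(Z)}\Bigr)^{s-1},
$$
where the last step uses $\kappa(Z)=\min_{1\le j\le s}\rho(Z,j)$, so that $4n/\rho(Z,j)\le 4n/\kappa(Z)$ for every $j$.

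Then I would bound the operator norm by the Frobenius norm, which is where the extra $\sqrt{s}$ factor arises:
$$
\|C\|\le\|C\|_F=\Bigl(\sum_{j=1}^s\|c_j\|_2^2\Bigr)^{1/2}\le\sqrt{s}\cdot\sqrt{\nu}\,s\Bigl(\frac{4n}{\kappa(Z)}\Bigr)^{s-1}=s^{3/2}\sqrt{\nu}\Bigl(\frac{4n}{\kappa(Z)}\Bigr)^{s-1},
$$
which is exactly the claimed estimate.

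I do not expect any real obstacle here: the argument is a direct assembly of Lemma~\ref{lem:Qj} (for the coefficient control), the crude inequality $\|v\|_2\le\sqrt{\nu}\,\|v\|_\infty$ on $\mathbb{R}^\nu$, and $\|\cdot\|\le\|\cdot\|_F$. The only mild inefficiency is the last inequality, which costs a factor $\sqrt{s}$; one could instead try to exploit near-orthogonality among the $c_j$ or the structure $V_N(Z)C=I_s$ to sharpen the constant, but the stated bound follows already from the Frobenius estimate and no such refinement is needed.
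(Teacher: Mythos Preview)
Your proof is correct and follows essentially the same route as the paper: the right inverse is the matrix $C=[c_1\ \cdots\ c_s]$ of Lagrange coefficient vectors from Lemma~\ref{lem:Qj}, and the norm bound comes from $\|c_j\|_2\le\sqrt{\nu}\,\|c_j\|_\infty$ together with a $\sqrt{s}$ loss. The only cosmetic difference is that you obtain the $\sqrt{s}$ via $\|C\|\le\|C\|_F$, whereas the paper uses the triangle inequality plus Cauchy--Schwarz on $\sum_j|y_j|$; both yield exactly $\sqrt{s}\cdot\max_j\|c_j\|_2$.
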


\begin{proof}
For each $ j = 1, 2, \dots, s $, let $ Q_j \in \mathcal{P}_N^n $ be the interpolating polynomial provided by Lemma~\ref{lem:Qj}, so that
$$
Q_j(z_i) = \delta_{ij}\quad \text{for all } i = 1, 2, \dots, s,
$$
where $\delta_{ij} $ is the Kronecker delta function, i.e. $\delta_{ij} = 1 $ if $ i = j $ and $\delta_{ij} = 0 $ otherwise.

Express each $ Q_j $ in the monomial basis:
$$
Q_j(z) = \sum_{|\alpha| \leq N} c_{j,\alpha} z^\alpha,
$$
where $ c_{j,\alpha} $ are the coefficients corresponding to the multi-index $\alpha $.

Let $ c_j \in \mathbb{R}^\nu $ be the coefficient vector of $ Q_j $, such that:
$$
c_j = \begin{bmatrix}
c_{j,\alpha_1} & c_{j,\alpha_2} & \cdots & c_{j,\alpha_\nu}
\end{bmatrix}^\top.
$$

Construct the matrix $ V_N(Z)^+ \in \mathbb{R}^{\nu \times s} $ by arranging the coefficient vectors $ c_j $ as its columns:
$$
V_N(Z)^+ = \begin{bmatrix}
c_1 & c_2 & \cdots & c_s
\end{bmatrix}.
$$

To verify that $ V_N(Z)^+ $ is indeed a right inverse of $ V_N(Z) $, compute the product $ V_N(Z) \cdot V_N(Z)^+ $:
$$
V_N(Z) \cdot V_N(Z)^+ = V_N(Z) \cdot \begin{bmatrix}
c_1 & c_2 & \cdots & c_s
\end{bmatrix} = \begin{bmatrix}
V_N(Z) c_1 & V_N(Z) c_2 & \cdots & V_N(Z) c_s
\end{bmatrix}.
$$

By the construction of the interpolating polynomials $ Q_j $, each $ c_j $ satisfies:
$$
V_N(Z) c_j = Q_j(Z) = \begin{bmatrix}
Q_j(z_1) \\
Q_j(z_2) \\
\vdots \\
Q_j(z_s)
\end{bmatrix} = \begin{bmatrix}
\delta_{j1}\\
\delta_{j2}\\
\vdots \\
\delta_{js}
\end{bmatrix} = e_j,
$$
where $ e_j $ is the $ j $-th standard basis vector in $\mathbb{R}^s $.

Substituting back, we obtain:
$$
V_N(Z) \cdot V_N(Z)^+ = \begin{bmatrix}
e_1 & e_2 & \cdots & e_s
\end{bmatrix} = I_s,
$$
where $ I_s $ is the $ s \times s $ identity matrix.

Thus, by constructing the right inverse $ V_N(Z)^+ $ using the interpolating polynomials $ Q_j $, we have established that:
$$
V_N(Z) \cdot V_N(Z)^+ = I_s.
$$

This confirms that $ V_N(Z)^+ $ is indeed a right inverse of the Vandermonde matrix $ V_N(Z) $, and consequently, $ V_N(Z) $ has full row rank.

The operator norm $\| V_N(Z)^+ \| $ is defined as
$$
\| V_N(Z)^+ \| = \sup_{\| y \|_2 = 1}\| V_N(Z)^+ y \|_2.
$$

Given that $ V_N(Z)^+ $ consists of the coefficient vectors $ c_j $, we can bound the norm $\| V_N(Z)^+ y \|_2 $ using the Cauchy-Schwarz inequality:
$$
\| V_N(Z)^+ y \|_2 = \left\| \sum_{j=1}^s y_j c_j \right\|_2 \leq \sum_{j=1}^s |y_j| \cdot \| c_j \|_2.
$$

Since $\| y \|_2 = 1 $, it follows that
$$
\sum_{j=1}^s |y_j| \leq \sqrt{s}\cdot \| y \|_2 = \sqrt{s}.
$$

Thus,
$$
\| V_N(Z)^+ \| \leq \sqrt{s}\cdot \max_{1 \leq j \leq s}\| c_j \|_2.
$$

By Lemma \ref{lem:Qj} and the definition of $\kappa(Z)$, we have
$$
\| Q_j \|_\infty \leq s \left( \frac{4n}{\rho(Z,j)}\right)^{s - 1}
\leq s \left( \frac{4n}{\kappa(Z)}\right)^{s - 1}.
$$

This bound on the $ L^\infty $ norm implies that each coefficient vector $ c_j $ satisfies
$$
\| c_j \|_\infty \leq s \left( \frac{4n}{\kappa(Z)}\right)^{s - 1}.
$$

Consequently, the $ L^2 $ norm of each $ c_j $ can be bounded by
$$
\| c_j \|_2 \leq \sqrt{\nu}\cdot \| c_j \|_\infty \leq s \cdot \sqrt{\nu}\left( \frac{4n}{\kappa(Z)}\right)^{s - 1}.
$$

Substituting this into the earlier inequality for $\| V_N(Z)^+ \| $, we obtain
$$
\| V_N(Z)^+ \| \leq \sqrt{s}\cdot s \cdot \sqrt{\nu}\left( \frac{4n}{\kappa(Z)}\right)^{s - 1} = s^{3/2}\cdot \sqrt{\nu}\left( \frac{4n}{\kappa(Z)}\right)^{s - 1}.
$$
\end{proof}

\section{Singular Values Analysis}\label{sec:spectral}

In this section, we analyze the singular values of the Vandermonde matrix $V_N(Z)$, which is crucial for understanding the conditioning of the interpolation problem. We establish both lower and upper bounds on the singular values and conclude with a bound on the condition number of $V_N(Z)$.

\subsection{Lower Bound on the Smallest Singular Value}

We begin with a general lemma that provides a lower bound on the smallest singular value of any matrix in terms of the distances of its columns to the subspaces spanned by the other columns.

\begin{lem}\label{lem:sigma_n}
Let $ M $ be an $ n \times m $ matrix with columns $ C_1, \dots, C_m $. For each $ j = 1, \dots, m $, let $ W_j $ be the subspace spanned by all columns except $ C_j $. Then,
$$
\sigma_{\min}(M) \geq \frac{1}{\sqrt{m}}\min_{1 \leq j \leq m}\operatorname{dist}(C_j, W_j),
$$
where $\sigma_{\min}(M) $ denotes the smallest singular value of $ M $, and $\operatorname{dist}(C_j, W_j) $ is the Euclidean distance from $ C_j $ to the subspace $ W_j $.
\end{lem}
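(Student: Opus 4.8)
The plan is to relate $\sigma_{\min}(M)$ to the worst-case column-to-span distance via the variational characterization $\sigma_{\min}(M) = \min_{\|x\|_2=1}\|Mx\|_2$. Fix a unit vector $x=(x_1,\dots,x_m)^\top$ and write $Mx = \sum_{k=1}^m x_k C_k$. Let $j$ be an index achieving $|x_j| = \max_k |x_k|$; since $\|x\|_2=1$ and there are $m$ coordinates, we have $|x_j|\ge 1/\sqrt m$. The idea is that $Mx$ contains the term $x_j C_j$ plus a vector lying in $W_j$, so its norm is bounded below by $|x_j|$ times the distance from $C_j$ to $W_j$.

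Concretely, I would write $Mx = x_j C_j + w$, where $w := \sum_{k\ne j} x_k C_k \in W_j$. Then
$$
\|Mx\|_2 = \Bigl\| x_j C_j + w \Bigr\|_2
= |x_j|\,\Bigl\| C_j + \tfrac{1}{x_j} w \Bigr\|_2
\ge |x_j|\cdot \operatorname{dist}(C_j, W_j),
$$
using that $-\tfrac{1}{x_j}w \in W_j$ and that $\operatorname{dist}(C_j,W_j) = \inf_{u\in W_j}\|C_j - u\|_2 \le \|C_j - (-\tfrac1{x_j}w)\|_2$. (If $x_j=0$ then $x=0$, contradicting $\|x\|_2=1$, so the division is legitimate; one should note this.) Combining with $|x_j|\ge 1/\sqrt m$ and $\operatorname{dist}(C_j,W_j)\ge \min_{1\le \ell\le m}\operatorname{dist}(C_\ell,W_\ell)$ gives
$$
\|Mx\|_2 \ge \frac{1}{\sqrt m}\,\min_{1\le \ell\le m}\operatorname{dist}(C_\ell,W_\ell).
$$
Since $x$ was an arbitrary unit vector, taking the infimum over $\|x\|_2=1$ yields $\sigma_{\min}(M)\ge \frac{1}{\sqrt m}\min_\ell \operatorname{dist}(C_\ell,W_\ell)$, as claimed.

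There is essentially no serious obstacle here; the only point requiring a moment's care is the pigeonhole step $|x_j|\ge 1/\sqrt m$ and the handling of the (vacuous) case $x_j = 0$. One should also be mindful that the lemma is stated for a general $n\times m$ matrix and will later be applied to $M = V_N(Z)^\top$ (so that the "columns" are the rows $V_N(z_j)$ and $\sigma_{\min}(M)=\sigma_{\min}(V_N(Z))$); the proof above is basis-free and works verbatim in that generality.
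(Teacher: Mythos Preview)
Your proof is correct and follows essentially the same argument as the paper: pick a coordinate $x_j$ with $|x_j|\ge 1/\sqrt{m}$, decompose $Mx = x_j C_j + w$ with $w\in W_j$, and use the definition of $\operatorname{dist}(C_j,W_j)$ to bound $\|Mx\|_2$ below. Your treatment is slightly more explicit (factoring out $|x_j|$ and addressing the $x_j=0$ case), but the structure and key steps are identical.
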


\begin{proof}
Consider any vector $ x \in \mathbb{R}^m $ with $\| x \|_2 = 1 $. By the Cauchy-Schwarz inequality, there exists at least one index $ j \in \{1, \dots, m\} $ such that $ |x_j| \geq \frac{1}{\sqrt{m}} $.

Now, examine the product $ Mx $:
$$
Mx = \sum_{i=1}^m x_i C_i.
$$
Decompose this sum by isolating the $ j $-th term:
$$
Mx = x_j C_j + \sum_{\substack{i=1\ i \neq j}}^m x_i C_i.
$$
Let $ w_j = \sum_{\substack{i=1\ i \neq j}}^m x_i C_i $, which resides in the subspace $ W_j $. Therefore,
$$
Mx = x_j C_j + w_j.
$$
The Euclidean norm of $ Mx $ satisfies
$$
\| Mx \|_2 = \| x_j C_j + w_j \|_2 \geq |x_j| \cdot \operatorname{dist}(C_j, W_j).
$$
This inequality holds because $\operatorname{dist}(C_j, W_j) $ represents the minimal distance between $ C_j $ and any vector in $ W_j $, ensuring that the contribution from $ C_j $ cannot be entirely "absorbed" by $ w_j $.

Given that $ |x_j| \geq \frac{1}{\sqrt{m}} $, it follows that
$$
\| Mx \|_2 \geq \frac{1}{\sqrt{m}}\cdot \operatorname{dist}(C_j, W_j).
$$
Taking the infimum over all unit vectors $ x \in \mathbb{R}^m $, we obtain
$$
\sigma_{\min}(M) = \inf_{\| x \|_2 = 1}\| Mx \|_2 \geq \frac{1}{\sqrt{m}}\cdot \min_{1 \leq j \leq m}\operatorname{dist}(C_j, W_j).
$$

Thus, the smallest singular value of $ M $ satisfies the stated inequality.
\end{proof}

Applying the general lemma to the transposed Vandermonde matrix $V_N(Z)$, we obtain a lower bound on its smallest singular value.

\begin{cor}\label{cor:lower_sigma_s}
The smallest singular value $\sigma_s(V_N(Z)) $ satisfies
$$
\sigma_s(V_N(Z)) \geq \frac{\kappa(Z)^{s-1}}{(4n)^{s-1} s \sqrt{s \cdot \nu}}.
$$
\end{cor}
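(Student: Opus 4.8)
The plan is to reduce the statement to Lemma~\ref{lem:sigma_n} applied to the transpose $M:=V_N(Z)^\top\in\mathbb{R}^{\nu\times s}$, and then feed in the row–to–span bound of Theorem~\ref{thm:distance}. First I would observe that the columns of $M$ are precisely the vectors $V_N(z_1),\dots,V_N(z_s)$, so that for each $j$ the span of all columns of $M$ except the $j$-th is exactly the subspace $W_j=\operatorname{span}\{V_N(z_i):i\neq j\}$ appearing in Theorem~\ref{thm:distance}. Lemma~\ref{lem:sigma_n}, with $m=s$, then yields
$$
\sigma_{\min}(M)\ \ge\ \frac{1}{\sqrt{s}}\min_{1\le j\le s}\operatorname{dist}\bigl(V_N(z_j),W_j\bigr).
$$

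Next I would record the elementary fact that a matrix and its transpose have the same nonzero singular values, and since $V_N(Z)$ has full row rank $s$ by Lemma~\ref{lem:full-rank}, both $V_N(Z)$ and $M=V_N(Z)^\top$ have exactly $s$ singular values, which coincide; in particular $\sigma_{\min}(M)=\sigma_s(V_N(Z))$. Combining this with the displayed inequality gives
$$
\sigma_s(V_N(Z))\ \ge\ \frac{1}{\sqrt{s}}\min_{1\le j\le s}\operatorname{dist}\bigl(V_N(z_j),W_j\bigr).
$$

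Finally I would insert the estimate of Theorem~\ref{thm:distance}, namely $\operatorname{dist}(V_N(z_j),W_j)\ge \rho(Z,j)^{s-1}/\bigl((4n)^{s-1}s\sqrt{\nu}\bigr)$, and use that $\kappa(Z)=\min_j\rho(Z,j)\le\rho(Z,j)$ together with monotonicity of $x\mapsto x^{s-1}$ on $[0,\infty)$ to replace $\rho(Z,j)$ by $\kappa(Z)$ uniformly in $j$. This produces
$$
\sigma_s(V_N(Z))\ \ge\ \frac{1}{\sqrt{s}}\cdot\frac{\kappa(Z)^{s-1}}{(4n)^{s-1}s\sqrt{\nu}}
\ =\ \frac{\kappa(Z)^{s-1}}{(4n)^{s-1}s\sqrt{s\,\nu}},
$$
which is the claimed bound. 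There is essentially no hard step here: the only point requiring a word of justification is the identification $\sigma_{\min}(V_N(Z)^\top)=\sigma_s(V_N(Z))$, which is where the full-rank conclusion of Lemma~\ref{lem:full-rank} (equivalently, Theorem~\ref{thm:rightinverse}) is used; everything else is bookkeeping with the already-established distance estimate and the definition of $\kappa(Z)$.
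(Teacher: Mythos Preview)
Your argument is correct and follows the paper's proof essentially verbatim: apply Lemma~\ref{lem:sigma_n} to the transpose $M=V_N(Z)^\top$, then insert the distance bound of Theorem~\ref{thm:distance} with $\rho(Z,j)$ replaced by $\kappa(Z)$. Your explicit remark that $\sigma_{\min}(V_N(Z)^\top)=\sigma_s(V_N(Z))$ via Lemma~\ref{lem:full-rank} is a useful clarification that the paper leaves implicit.
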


\begin{proof}
Let $ M = V_N(Z)^\top $, an $\nu \times s $ matrix (with columns which are the $C_i = V_N(z_i)^\top$). From Lemma \ref{lem:sigma_n}, we have
$$
\sigma_s(M) \geq \frac{1}{\sqrt{s}}\cdot \min_{1 \leq j \leq s}\operatorname{dist}(C_j, W_j).
$$

Next, applying Theorem \ref{thm:distance}, we established that for each $ j $
$$
\operatorname{dist}(C_j, W_j) \geq \frac{\kappa(Z)^{s-1}}{(4n)^{s-1} s \sqrt{\nu}},
$$
 where we used $\kappa(Z) \leq \rho(Z, j) $ for all $ j $.

Substituting this into the inequality from Lemma \ref{lem:sigma_n}, we obtain
$$
\sigma_s(M) \geq \frac{1}{\sqrt{s}}\cdot \frac{\kappa(Z)^{s-1}}{(4n)^{s-1} s \sqrt{\nu}} = \frac{\kappa(Z)^{s-1}}{(4n)^{s-1} s \sqrt{s \cdot \nu}}.
$$
\end{proof}

\begin{rem}
More generally, if $V$ has full row rank and $B$ is any right inverse of $V$
(i.e., $VB = I$), then
$$
 \sigma_{\min}(V) \ge \frac{1}{\|B\|}.
$$
One way to see this is to let $V^+$ denote the Moore--Penrose pseudoinverse
of $V$. Then $V^+V$ is the orthogonal projector onto the row space of $V$,
so in particular $\|V^+V\|=1$ and, for any right inverse $B$,
$$
 V^+ = (V^+V)B.
$$
Hence $\|V^+\| \le \|B\|$. Since $\|V^+\| = 1/\sigma_{\min}(V)$, this implies
$1/\|B\| \le \sigma_{\min}(V)$. Applied with $B = V_N(Z)^+$ from
Theorem~\ref{thm:rightinverse}, this reproduces the lower bound of
Corollary~\ref{cor:lower_sigma_s}.
\end{rem}

\subsection{Upper Bound on the Largest Singular Value}

We establish an upper bound on the largest singular value of $ V_N(Z) $, which, together with the lower bound, allows us to assess the conditioning of the matrix.

\begin{lem}\label{lem:upper_sigma_1}
Let $ Z = \{z_1, \dots, z_s\}\subset B_2^n \subset \mathbb{R}^n $ be a set of $ s $ distinct points such that $\kappa(Z) > 0 $, and let $ N \geq s - 1 $. Then, the largest singular value $\sigma_1(V_N(Z)) $ of the Vandermonde matrix $ V_N(Z) $ satisfies
$$
\sigma_1(V_N(Z)) \leq \sqrt{ s \cdot \nu }.
$$
\end{lem}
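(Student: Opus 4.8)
The plan is to bound the operator norm $\sigma_1(V_N(Z))=\|V_N(Z)\|_{2\to2}$ from above by the Frobenius norm, which for an entrywise-bounded matrix of this size is immediate. Recall the standard inequality $\sigma_1(M)\le\|M\|_F$ for any matrix $M$, since $\sigma_1(M)^2$ is the largest eigenvalue of $M^\top M$ while $\|M\|_F^2=\operatorname{tr}(M^\top M)$ is the sum of all its eigenvalues (all nonnegative). So it suffices to show $\|V_N(Z)\|_F^2\le s\,\nu$.

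The key step is the entrywise bound. The $(i,\alpha)$ entry of $V_N(Z)$ is $z_i^\alpha=\prod_{\ell=1}^n z_{i,\ell}^{\,\alpha_\ell}$, and since $z_i\in B_2^n$ we have $\|z_i\|_2\le1$, hence $|z_{i,\ell}|\le1$ for every coordinate $\ell$; therefore $|z_i^\alpha|\le1$ for all $i$ and all $|\alpha|\le N$. Summing the squares over the $s$ rows and the $\nu=\nu(n,N)=\binom{N+n}{N}$ columns gives
$$
\|V_N(Z)\|_F^2=\sum_{i=1}^s\sum_{|\alpha|\le N}|z_i^\alpha|^2\le s\,\nu,
$$
and taking square roots yields $\sigma_1(V_N(Z))\le\|V_N(Z)\|_F\le\sqrt{s\,\nu}$, as claimed. (Equivalently, one may argue row by row: each row $V_N(z_i)$ has Euclidean norm at most $\sqrt{\nu}$ by the same entrywise bound, and then $\|V_N(Z)x\|_2^2=\sum_i\langle V_N(z_i),x\rangle^2\le\|x\|_2^2\sum_i\|V_N(z_i)\|_2^2\le s\,\nu\,\|x\|_2^2$.)

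I do not expect any genuine obstacle here: the statement is a soft upper bound that uses only the normalization $Z\subset B_2^n$ and the dimension count $\nu$, not the geometry parameter $\kappa(Z)$ at all. The only thing worth remarking is that the bound is deliberately crude — it ignores cancellation among the monomials and is typically far from tight — but it is all that is needed to combine with Corollary~\ref{cor:lower_sigma_s} and control the condition number $\sigma_1/\sigma_s$ of $V_N(Z)$.
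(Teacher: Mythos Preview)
Your proof is correct and follows essentially the same route as the paper: bound $\sigma_1$ by the Frobenius norm, use $Z\subset B_2^n$ to get the entrywise bound $|z_i^\alpha|\le 1$, and sum over the $s\times\nu$ entries. The paper organizes the Frobenius sum column by column (each column has $\ell_2$-norm at most $\sqrt{s}$), whereas you sum directly over all entries or alternatively row by row, but these are the same computation.
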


\begin{proof}
Recall that the operator norm $\| M \|_2 $ of a matrix $ M $ is equal to its largest singular value $\sigma_1(M) $. For the Vandermonde matrix $ V_N(Z) $, we can bound $\| V_N(Z) \|_2 $ using the Frobenius norm $\| V_N(Z) \|_F $, which satisfies
$$
\| V_N(Z) \|_2 \leq \| V_N(Z) \|_F.
$$
The Frobenius norm is given by
$$
\| V_N(Z) \|_F = \sqrt{ \sum_{j=1}^{\nu}\| C_j \|_2^2 },
$$
where $ C_j $ denotes the $ j $-th column of $ V_N(Z) $.

By the assumption $ Z \subset B_2^n $, each interpolation point $ z_i \in Z $ satisfies $\| z_i \|_2 \leq 1 $. Consequently, for any multi-index $\alpha $ with $ |\alpha| \leq N $, the monomial $ z_i^\alpha $ satisfies
$$
|z_i^\alpha| \leq \| z_i \|_\infty^{|\alpha|}\leq \| z_i \|_2^{|\alpha|}\leq 1.
$$
Thus, each entry of the column $ C_j $ satisfies
$$
\| C_j \|_\infty = \max_{1 \leq i \leq s} |(C_j)_i| \leq 1.
$$
Using the relationship between the $\ell_\infty $-norm and the $\ell_2 $-norm in $\mathbb{R}^s $,
$$
\| C_j \|_2 \leq \sqrt{s}\cdot \| C_j \|_\infty,
$$
we obtain
$$
\| C_j \|_2 \leq \sqrt{s}.
$$
Substituting this into the Frobenius norm expression, we have
$$
\| V_N(Z) \|_F = \sqrt{ \sum_{j=1}^{\nu}\| C_j \|_2^2 }\leq \sqrt{ \sum_{j=1}^{\nu} (\sqrt{s})^2 } = \sqrt{ \sum_{j=1}^{\nu} s } = \sqrt{ s \cdot \nu }.
$$
Therefore,
$$
\sigma_1(V_N(Z)) = \| V_N(Z) \|_2 \leq \| V_N(Z) \|_F \leq \sqrt{ s \cdot \nu }. 
$$
\end{proof}

\subsection{Condition Number of the Vandermonde Matrix}

The condition number $\kappa(V_N(Z)) $ of the Vandermonde matrix $ V_N(Z) $ is defined as the ratio of its largest singular value to its smallest singular value:
$$
\kappa(V_N(Z)) = \frac{\sigma_1(V_N(Z))}{\sigma_s(V_N(Z))}.
$$

The condition number reflects the sensitivity of the interpolation problem to perturbations in the data. This upper bound provides insight into how the geometry of $ Z $ and the degree $ N $ influence the conditioning of the Vandermonde matrix.

Combining the bounds on the singular values, we derive a bound on the condition number of $ V_N(Z) $.

\begin{thm}\label{thm:conditionnumber}
The condition number $\kappa(V_N(Z)) $ of the Vandermonde matrix satisfies:
$$
\kappa(V_N(Z)) \leq s^2 \cdot \nu \cdot \left( \frac{4n}{\kappa(Z)}\right)^{s-1}.
$$
\end{thm}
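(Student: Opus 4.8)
The plan is simply to combine the two singular-value estimates already in hand. Recall that, by definition, $\kappa(V_N(Z)) = \sigma_1(V_N(Z))/\sigma_s(V_N(Z))$, and that under the standing hypotheses of the statement (the $z_i$ are distinct, $Z\subset B_2^n$, and $N\ge s-1$, so in particular $\kappa(Z)>0$ by Remark~\ref{rem:kappa-positive}) both the numerator and the denominator have been controlled. There is no genuine obstacle here; the theorem is a one-line corollary of the preceding spectral analysis.

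Concretely, first I would invoke Lemma~\ref{lem:upper_sigma_1} to obtain $\sigma_1(V_N(Z))\le \sqrt{s\,\nu}$ with $\nu=\nu(n,N)=\binom{N+n}{N}$. Next I would invoke Corollary~\ref{cor:lower_sigma_s} to obtain the lower bound
$$
\sigma_s(V_N(Z))\ \ge\ \frac{\kappa(Z)^{s-1}}{(4n)^{s-1}\,s\,\sqrt{s\,\nu}},
$$
which is strictly positive (this is also why $V_N(Z)$ has full row rank, cf.\ Lemma~\ref{lem:full-rank}), so the condition number is finite and well defined.

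Then I would form the quotient and simplify, using $\sqrt{s\,\nu}\cdot\sqrt{s\,\nu}=s\,\nu$:
$$
\kappa(V_N(Z)) = \frac{\sigma_1(V_N(Z))}{\sigma_s(V_N(Z))}
\ \le\ \frac{\sqrt{s\,\nu}}{\kappa(Z)^{s-1}\big/\bigl((4n)^{s-1} s \sqrt{s\,\nu}\bigr)}
\ =\ s^2\,\nu\left(\frac{4n}{\kappa(Z)}\right)^{s-1},
$$
which is exactly the asserted bound. The only point worth spelling out in the write-up is the positivity of the $\sigma_s$ lower bound; beyond that the argument is purely algebraic. One might optionally add the remark that, since $\nu\sim N^n$, the estimate degrades only polynomially in the basis size, with all the genuinely geometric dependence confined to the explicit factor $(4n/\kappa(Z))^{s-1}$, in keeping with the separation-free philosophy announced in the introduction.
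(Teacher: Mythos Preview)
Your proposal is correct and follows exactly the paper's own approach: the theorem is obtained as an immediate consequence of Lemma~\ref{lem:upper_sigma_1} and Corollary~\ref{cor:lower_sigma_s}, dividing the upper bound $\sigma_1\le\sqrt{s\nu}$ by the lower bound $\sigma_s\ge \kappa(Z)^{s-1}/((4n)^{s-1}s\sqrt{s\nu})$. Your additional remarks on positivity of $\sigma_s$ and the polynomial dependence on $\nu$ are harmless elaborations beyond the paper's one-line proof.
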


\begin{proof}
This is a direct consequence of Lemma \ref{lem:upper_sigma_1} and Corollary \ref{cor:lower_sigma_s}.
\end{proof}


\end{document}